\theoremstyle{plain}
\newtheorem{Thm}{Theorem}[section]
\newtheorem{Lem}[Thm]{Lemma}
\newtheorem{Cor}[Thm]{Corollary}
\theoremstyle{definition}
\newtheorem{Exam}[Thm]{Example}
\newcommand{\TT}{{\mathbb T}}
\newcommand{\NN}{{\mathbb N}}
\newcommand{\ZZ}{{\mathbb Z}}
\newcommand{\DD}{{\mathbb D}}
\newcommand{\HH}{{\mathcal H}}
\def\<{\langle}
\def\>{\rangle}
\begin{document}
\date{}

\title{A short note on the Feichtinger Conjecture} 
\author{I. Chalendar, E. Fricain, D. Timotin}

\address{Universit\'e de Lyon;
  Universit\'e Lyon 1, INSA de Lyon, Ecole Centrale de Lyon, CNRS, UMR 5208, Institut Camille Jordan, 43 boulevard du 11 novembre 1918,
 F-69622 Villeurbanne Cedex, France. 
 {\tt chalenda@math.univ-lyon1.fr}.
 },
 \address{Universit\'e de Lyon,
  Universit\'e Lyon 1, INSA de Lyon, Ecole Centrale de Lyon, CNRS, UMR 5208, Institut Camille Jordan, 43 boulevard du 11 novembre 1918,
  F-69622 Villeurbanne Cedex, France.
 {\tt fricain@math.univ-lyon1.fr}} 
 \address{Institute of Mathematics of the 
 Romanian Academy, PO Box 1-764, Bucharest 014700, Romania. {\tt Dan.Timotin@imar.ro}}
 
 \thanks{This work was supported by ANR Project no. 09-BLAN-005801 ``FRAB''}

\begin{abstract}
We discuss the equivalence of the Feichtinger Conjecture with a weaker variant and we show its connection with a conjecture of Agler--McCarthy--Seip concerning complete Nevanlinna--Pick reproducing kernel spaces. Some new examples of sequences of normalized reproducing kernels satisfying the Feichtinger Conjecture are consequently obtained.
\end{abstract}
\subjclass[2010]{Primary 46E22, 46C05; Secondary 46B15}
\keywords{Feichtinger Conjecture, reproducing kernel Hilbert space, complete Nevanlinna--Pick space}
\maketitle

\section{Introduction}
The Feichtinger Conjecture in harmonic analysis was stated in 2003 and appeared in print for the first time in \cite{casazza-christensen2003}. It became a topic of high interest and of strong activity since it has 
been shown to be equivalent to the celebrated Kadison--Singer Problem \cite{casazza-tremain}. There are many variations of the Feichtinger Conjecture, all equivalent, but we shall be interested in the version involving Bessel sequences and Riesz sequences. First let us recall the basic definitions.  
Let $\HH$ be a Hilbert space and let $(x_n)_{n\geq 1}$ be a sequence in $\HH$. We say that $(x_n)_{n\geq 1}$ is a \emph{Riesz sequence}  if there exist constants $A,B>0$ such that 
\begin{equation}\label{eq:bessel-sequence}
A\sum_{n\geq 1} |a_n|^2\leq \left\| \sum_{n\geq 1} a_n x_n\right\|^2\leq B\sum_{n\geq 1} |a_n|^2,
\end{equation}
for all finitely supported sequence $(a_n)_{n\geq 1}$ of complex numbers. If only the right hand side inequality is satisfied in~\eqref{eq:bessel-sequence}, then we say that $(x_n)_{n\geq 1}$ is a \emph{Bessel sequence}. Note that a Bessel sequence $(x_n)_{n\geq 1}$ is always bounded above; we say that it is \emph{bounded} if it is bounded away from zero, that is there exists a constant $\delta>0$ such that $\|x_n\|\geq\delta$ for every $n\geq 1$.

\vskip 0.3cm
\noindent
{\bf Conjecture 1. Feichtinger Conjecture (FC)}. Every bounded Bessel sequence in a Hilbert space can be partitioned into finitely many Riesz  sequences.

\vskip 0.3cm
It is easily seen that (FC) is equivalent to the next statement.

\vskip 0.3cm
\noindent
{\bf Conjecture 1$'$. Feichtinger Conjecture (FC$'$)}. Every Bessel sequence of unit vectors in a Hilbert space can be partitioned into finitely many Riesz  sequences.

\vskip 0.3cm

 A sequence $(x_n)_{n\geq 1}$ of unit vectors in $\HH$  is called \emph{separated} if there exists a constant $\gamma<1$ such that 
\begin{equation}\label{eq:separated}
|\langle x_n,x_k\rangle|\leq \gamma,
\end{equation}
for any $n,k\geq 1$, $n\not= k$. This definition allows us to  formulate the following slightly weaker conjecture.

\vskip 0.3cm
\noindent
{\bf Conjecture 2. Weaker Feichtinger Conjecture (WFC)}. Every  separated Bessel sequence of unit vectors in a Hilbert space can be partitioned into finitely many Riesz  sequences.
\vskip 0.3 cm

In this short note we show that (WFC) is actually equivalent to (FC$'$) (and thus to (FC)) and we connect these conjectures, in the context of reproducing kernel Hilbert spaces,  with another conjecture of Agler--McCarthy--Seip. This leads to new examples of sequences of reproducing kernels which satisfy (FC).  

We conclude this introduction with some simple (and well-known) remarks. Consider, for a given sequence $(x_n)_{n\geq 1}$ in a Hilbert space $\HH$, the map $J$ that associates to a finitely supported sequence $(a_n)_{n\geq 1}$ of complex numbers the vector $\sum_{n\geq 1} a_n x_n$. It follows immediately from the definitions that $(x_n)_{n\geq 1}$ is a Bessel sequence if and only if $J$ can be extended to a bounded operator from $\ell^2$ to $\HH$, and it is a Riesz sequence if and only if this map is also bounded below. Its adjoint $J^*$ maps $x\in\HH$ into the sequence $(\langle x,x_m\rangle)_{n\geq 1}$, while $J^*J$ is the \emph{Gramian} $\Gamma$ of the sequence, defined by  $\Gamma=(\Gamma_{n,m})_{n,m\geq 1}$, with $\Gamma_{n,m}=\langle x_n,x_m\rangle$. Thus $(x_n)_{n\geq 1}$ is a Bessel sequence if and only if $\Gamma$ is bounded, and it is a Riesz sequence if and only if $\Gamma$ is bounded and bounded below.

Let us note that the separation  can be viewed as a kind of ``baby version'' of the Riesz sequence condition. Indeed, if we consider the Gramian $\left(\begin{smallmatrix}
            1&      \langle x_n,x_m\rangle\\
            \langle x_m,x_n\rangle& 1
            \end{smallmatrix}\right)$ of two unit vectors $x_n, x_m$, its smallest eigenvalue is $1-|\langle x_n, x_m\rangle|$; thus condition~\eqref{eq:separated} can be viewed as saying that all these $2\times 2$ Gramians have a common lower bound, or, equivalently, that all these two term sequences are Riesz pairs with a common lower bound in~\eqref{eq:bessel-sequence}.

Suppose now that we are given a sequence $(x_n)_{n\geq 1}$ of nonzero vectors in a Hilbert space $\HH$ and we are interested by the relevant properties of the normalized sequence $(\hat x_n)_{n\geq 1}$, where we define $\hat x_n=\frac{x_n}{\|x_n\|}$. These are preserved by the action of an invertible operator, as shown by the next lemma.

\begin{Lem}\label{le:action of invertible}
Suppose $(x_n)_{n\geq 1}$ is a sequence in $\HH$ and $A:\HH\to\HH'$ is an invertible operator. Define $x'_n=Ax_n$. Then:
\begin{enumerate}[(a)] 
\item $(\hat x_n)_{n\geq 1}$ is a Bessel sequence if and only if $(\widehat{ x'}\hskip-1mm{}_n)_{n\geq 1}$ is a Bessel sequence.
\item
$(\hat x_n)_{n\geq 1}$ is a Riesz  sequence if and only if $(\widehat{ x'}\hskip-1mm{}_n)_{n\geq 1}$ is a Riesz  sequence.
\item
$(\hat x_n)_{n\geq 1}$ is separated if and only if $(\widehat{ x'}\hskip-1mm{}_n)_{n\geq 1}$ is separated.
\end{enumerate}
\end{Lem}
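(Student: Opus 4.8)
The plan is to reduce all three parts to a single algebraic relation between the two normalized sequences. Writing $c_n=\|x_n\|/\|Ax_n\|$, which is well defined and positive because $A$ is invertible, one checks directly that
\[
\widehat{x'}_n=\frac{Ax_n}{\|Ax_n\|}=c_n\,A\hat x_n .
\]
The first thing I would record is that the scalars $c_n$ are uniformly controlled from above and below: applying $\|A^{-1}\|^{-1}\|h\|\le\|Ah\|\le\|A\|\,\|h\|$ to the unit vector $h=\hat x_n$ gives $\|A\hat x_n\|=1/c_n$, hence $\|A\|^{-1}\le c_n\le\|A^{-1}\|$. This two-sided bound is the crux of the whole argument, and it is precisely where invertibility of $A$ enters; the one genuinely delicate point throughout is to keep every constant uniform in $n$ (and in $n,k$ for part (c)), which this bound guarantees.

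For parts (a) and (b) I would pass to the synthesis operators introduced just before the lemma. Let $J\colon\ell^2\to\HH$ and $J'\colon\ell^2\to\HH'$ be the synthesis operators of $(\hat x_n)$ and $(\widehat{x'}_n)$, and let $D$ be the diagonal operator on $\ell^2$ with diagonal entries $(c_n)$. By the previous bound $D$ is bounded and boundedly invertible, and the displayed relation yields $J'=AJD$ on finitely supported sequences. Since $A$ and $D$ are both bounded with bounded inverses, $J'$ is bounded if and only if $J$ is (use $J=A^{-1}J'D^{-1}$ for the converse), which is (a); and $J'$ is bounded below if and only if $J$ is, by the same sandwiching of norms, which together with (a) gives (b).

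For the separation statement (c) I would argue directly on the two-term sub-Gramians, in the spirit of the remark preceding the lemma. Fix $n\neq k$ and scalars $a,b$; from $\widehat{x'}_n=c_nA\hat x_n$ we get $a\widehat{x'}_n+b\widehat{x'}_k=A(ac_n\hat x_n+bc_k\hat x_k)$, so
\[
\|a\widehat{x'}_n+b\widehat{x'}_k\|^2\ge\|A^{-1}\|^{-2}\,\|ac_n\hat x_n+bc_k\hat x_k\|^2 .
\]
If $(\hat x_n)$ is separated with constant $\gamma$, the $2\times2$ Gramian of $\hat x_n,\hat x_k$ has smallest eigenvalue $1-|\langle\hat x_n,\hat x_k\rangle|\ge1-\gamma$, whence $\|ac_n\hat x_n+bc_k\hat x_k\|^2\ge(1-\gamma)(c_n^2|a|^2+c_k^2|b|^2)\ge(1-\gamma)\|A\|^{-2}(|a|^2+|b|^2)$. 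Combining the two inequalities shows that every $2\times2$ Gramian of $(\widehat{x'}_n)$ has smallest eigenvalue at least $\delta'=(1-\gamma)\|A\|^{-2}\|A^{-1}\|^{-2}>0$, that is $|\langle\widehat{x'}_n,\widehat{x'}_k\rangle|\le1-\delta'<1$, so $(\widehat{x'}_n)$ is separated. Finally, all three reverse implications follow by symmetry, applying the same reasoning to the invertible operator $A^{-1}$ and the sequence $(x'_n)$.
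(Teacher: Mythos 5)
Your proof is correct and takes essentially the same route as the paper: the factorization $J'=AJD$, where $D$ is the diagonal operator with entries $c_n=\|x_n\|/\|Ax_n\|$ and is invertible because $A$ is, is exactly the paper's argument for (a) and (b). Your treatment of (c) via the smallest eigenvalue of the $2\times 2$ Gramians merely makes explicit what the paper leaves implicit when it invokes the preceding remark that separation means all two-term subsequences are Riesz pairs with a common lower bound.
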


\begin{proof}
If we denote $J((a_n))=\sum_n a_n \hat x_n$ and $J'((a_n))=\sum_n a_n \widehat{ x'}\hskip-1mm{}_n$, then $J'=AJD$, where
 $D$ is the diagonal operator on $\ell^2$ with entries $\frac{\|x_n\|}{\|Ax_n\|}$; $D$ is invertible since $A$ is. It follows that $J$ is bounded or bounded below if and only if $J'$ is, and thus the Lemma follows from the comments preceding it.
\end{proof}

\section{(WFC) and (FC$'$)}
We first begin with a simple combinatorial lemma.

\begin{Lem}\label{lem:key-lemma}
Let $k$ be a nonnegative integer. Assume that any two distinct integers  are either friends or enemies (and  cannot be both), and  that given an integer $i\in\NN$, it has at most  $k$ enemies.  Then there is partition of $\NN$ into $k+1$ subsets, $I_1,I_2,\dots,I_{k+1}$, such that the elements of each subset  are friends.
\end{Lem}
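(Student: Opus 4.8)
The plan is to recognise this as a standard graph-colouring statement and to dispose of it by greedy colouring. Form the \emph{enemy graph} $G$ on the vertex set $\NN$, joining two integers by an edge precisely when they are enemies. The hypothesis that each integer has at most $k$ enemies says exactly that $G$ has maximum degree at most $k$, and a partition of $\NN$ into $k+1$ sets of mutual friends is precisely a proper colouring of $G$ with $k+1$ colours: the classes $I_1,\dots,I_{k+1}$ are the colour classes, which are independent sets and hence contain no pair of enemies.

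First I would colour the integers one at a time, following the natural order $1,2,3,\dots$ of $\NN$. Suppose the integers $1,\dots,n-1$ have already been assigned colours from the palette $\{1,\dots,k+1\}$ in such a way that no two enemies share a colour. When we reach $n$, its enemies among $\{1,\dots,n-1\}$ number at most $k$ (since $n$ has at most $k$ enemies in all), so they occupy at most $k$ of the $k+1$ available colours. Hence at least one colour remains free, and we assign it to $n$. By construction no two enemies ever receive the same colour, so placing each integer into the subset $I_j$ indexed by its colour yields the desired partition.

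The point worth flagging is that $\NN$ is infinite, so one cannot simply invoke a single finite greedy colouring; the argument must be organised as an induction in which each integer is coloured at a definite finite stage. This causes no real difficulty precisely because the degree bound is uniform: the colour assigned to $n$ depends only on the finitely many (at most $k$) enemies of $n$ that precede it, so the inductive construction is well defined at every step and produces a global colouring of all of $\NN$. Alternatively, one could colour every finite subgraph with $k+1$ colours by the same greedy argument and then pass to the infinite graph via the De Bruijn--Erd\H{o}s theorem, but the direct induction on the well-ordering of $\NN$ is more elementary and self-contained.
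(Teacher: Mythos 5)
Your proof is correct and is essentially the paper's own argument: the paper also colours the integers greedily in their natural order, adding $N+1$ to a class containing none of its (at most $k$) enemies, which must exist by pigeonhole. Your graph-colouring language and the remark about the De Bruijn--Erd\H{o}s alternative are just reformulations of the same inductive construction.
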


\begin{proof}
Assume that we have partitioned $\{1,2,\dots,N\}$ into $k+1$ subsets, $I_1,I_2,\dots,I_{k+1}$, such that the elements of each subset  are friends. If $N+1$ has an enemy in each subset, then $N+1$ has at least $k+1$ enemies, a contradiction. Therefore there exists $i_{N+1}\in\{1,\dots,k+1\}$ such that the elements of $I_{i_{N+1}}$ are friends with $N+1$. Then we add $N+1$ to $I_{i_{N+1}}$. Inductively we construct a partition of $\NN$ as required. 
\end{proof}

\begin{Thm}\label{Thm:main}
Let $\HH$ be a Hilbert space and let $(x_n)_{n\geq 1}$ be a Bessel sequence  of unit vectors in $\HH$. Then $(x_n)_{n\geq 1}$ can be partitioned into finitely many separated Bessel sequences. 
\end{Thm}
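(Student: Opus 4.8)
The plan is to reduce the statement to the combinatorial Lemma~\ref{lem:key-lemma} by introducing a suitable notion of ``friends'' and ``enemies'' on the index set $\NN$. First I would fix a separation threshold $\gamma\in(0,1)$, say $\gamma=1/2$, and declare two distinct indices $n,k$ to be \emph{friends} if $|\langle x_n,x_k\rangle|\le\gamma$ and \emph{enemies} otherwise. A class of pairwise friends is then exactly a separated subsequence with separation constant $\gamma$, so it suffices to partition $\NN$ into finitely many friend-classes. This is precisely what Lemma~\ref{lem:key-lemma} delivers, provided we can bound uniformly the number of enemies of each index.

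The crux is therefore this enemy count, and it is the only place where the Bessel hypothesis is used. Let $B$ be a Bessel bound, so that the associated operator $J\colon\ell^2\to\HH$ satisfies $\|J\|\le\sqrt{B}$. Since each $x_i$ is a unit vector, I would estimate
\[
\sum_{m\ge 1}|\langle x_i,x_m\rangle|^2=\|J^*x_i\|^2\le B,
\]
and then discard the diagonal term $|\langle x_i,x_i\rangle|^2=1$ to obtain $\sum_{m\ne i}|\langle x_i,x_m\rangle|^2\le B-1$. Each enemy $m$ of $i$ contributes more than $\gamma^2$ to this sum, so $i$ has strictly fewer than $(B-1)/\gamma^2$ enemies; in particular the number of enemies of any index is bounded by the fixed integer $k=\lfloor(B-1)/\gamma^2\rfloor$.

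With this uniform bound in hand, Lemma~\ref{lem:key-lemma} produces a partition of $\NN$ into $k+1$ subsets $I_1,\dots,I_{k+1}$, each consisting of pairwise friends, so that each subsequence $(x_n)_{n\in I_j}$ is separated. It then remains only to observe that each such subsequence is again Bessel: extending a finitely supported family $(a_n)_{n\in I_j}$ by zero to all of $\NN$ gives $\|\sum_{n\in I_j}a_n x_n\|^2\le B\sum_{n\in I_j}|a_n|^2$ with the same bound $B$. Hence each $I_j$ yields a separated Bessel sequence, completing the proof. I expect the enemy-counting inequality to be the main (and essentially the only) obstacle; once the threshold $\gamma$ is fixed, the Bessel bound converts directly into a finite degree bound, after which the combinatorial lemma does the rest mechanically.
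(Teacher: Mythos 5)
Your proof is correct and follows essentially the same approach as the paper's: both define ``enemies'' via a fixed overlap threshold, use the Bessel bound to show each index has a uniformly bounded number of enemies, and then invoke Lemma~\ref{lem:key-lemma} to obtain the finite partition into separated (and automatically still Bessel) subsequences. The only differences are cosmetic -- your threshold is $|\langle x_n,x_k\rangle|>1/2$ versus the paper's $|\langle x_i,x_j\rangle|^2\geq 1/2$, and you additionally discard the diagonal term and spell out why each subsequence remains Bessel.
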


\begin{proof}
Given two integers $i,j$, we say that $i,j$ are enemies if 
\[
|\langle x_i,x_j\rangle|^2\geq 1/2.
\]
Since $(x_n)_{n\geq 1}$ is a Bessel sequence, the associated operator $J^*$  is bounded, which means that there exists a constant $C>0$ such that 
\[
\sum_{j=1}^{+\infty}|\langle x_j,x\rangle|^2\leq C\|x\|^2.
\] 
Fix $i\in\NN$. Then we get
\[
\frac{1}{2}\sharp\{j:(i,j)\hbox{ \small{are enemies}}\} \leq \sum_{j:(i,j)\hbox{ \footnotesize{are enemies}}}|\langle x_j,x_i\rangle|^2\leq C.
\]
It follows that $i$ has at most $\lfloor 2C \rfloor+1$ enemies, where $\lfloor \cdot \rfloor$ denotes the integer part. The conclusion follows now from Lemma~\ref{lem:key-lemma}.
\end{proof}

We immediately obtain the following.
\begin{Cor}
The Feichtinger Conjecture (FC) and the weaker Feichtinger Conjecture (WFC) are equivalent.
\end{Cor}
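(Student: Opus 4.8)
The plan is to establish the two implications separately, relying on the equivalence (FC)$\Leftrightarrow$(FC$'$) already recorded in the introduction, so that it suffices to prove (FC$'$)$\Leftrightarrow$(WFC). The implication (FC$'$)$\Rightarrow$(WFC) is immediate: a separated Bessel sequence of unit vectors is in particular a Bessel sequence of unit vectors, so the partition into finitely many Riesz sequences guaranteed by (FC$'$) applies verbatim, and (WFC) follows.

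For the reverse implication (WFC)$\Rightarrow$(FC$'$), I would start from an arbitrary Bessel sequence $(x_n)_{n\geq 1}$ of unit vectors and apply Theorem~\ref{Thm:main} to split it into finitely many separated Bessel subsequences, indexed by sets $S_1,\dots,S_m$ that partition $\NN$. The point to verify here is that each piece $(x_n)_{n\in S_j}$ is again a Bessel sequence: restricting the inequality $\sum_{n}|\langle x,x_n\rangle|^2\leq B\|x\|^2$ to the indices in $S_j$ only decreases the left-hand side, so the same bound $B$ works. Hence each $(x_n)_{n\in S_j}$ is a separated Bessel sequence of unit vectors, to which (WFC) applies.

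Finally, I would assemble the pieces. By (WFC), each $(x_n)_{n\in S_j}$ can itself be partitioned into finitely many Riesz sequences; taking the union over $j=1,\dots,m$ of all these subpartitions yields a partition of the original sequence $(x_n)_{n\geq 1}$ into finitely many Riesz sequences, which is precisely the conclusion of (FC$'$).

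Since all the substantive work is carried by Theorem~\ref{Thm:main}, there is no serious obstacle; the only points requiring care are that a subsequence of a Bessel sequence is again a Bessel sequence with the same bound (so that (WFC) is genuinely applicable to each piece) and that a finite union of finite partitions remains finite.
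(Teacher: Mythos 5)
Your proof is correct and follows exactly the route the paper intends: the paper derives this corollary immediately from Theorem~\ref{Thm:main} via the very argument you spell out (the trivial direction plus splitting a Bessel sequence of unit vectors into finitely many separated Bessel pieces, applying (WFC) to each, and merging the finitely many partitions). Your added checks — that a subsequence of a Bessel sequence is Bessel with the same bound, and that the union of finitely many finite partitions is finite — are precisely the details the paper leaves implicit.
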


It is natural to ask at this point if a separated Bessel sequence is always a Riesz  sequence; according to Theorem~\ref{Thm:main}, this would give a simple solution to the Feichtinger Conjecture. Not unexpectedly, this is false in general; a simple example is the following.

\begin{Exam}
Consider the function $\phi(z)=1+1/2(z+\bar z)$; it is a positive function on $\TT$, and multiplication with $\phi$ in $L^2(\TT)$ is a bounded, non-invertible, positive operator. Its matrix with respect to the canonical basis is $\Gamma=(\Gamma_{n,m})_{n,m\in \ZZ}$, with
\[
 \Gamma_{n,m}=\begin{cases}
               1& \text{if } n-m=0,\\
1/2 & \text{if } |n-m|=1,\\
0 &\text{otherwise.}
              \end{cases}
\]
If one takes $f_n(z)=\phi^{1/2}(z)z^n$, $z\in\TT$, then 
\[
 \langle f_n, f_m \rangle= \langle \phi z^n, z^m \rangle
 =\frac{1}{2\pi}\int_0^{2\pi}\phi(e^{it})e^{i(n-m)t}\,dt
 =\hat\phi(m-n)=\Gamma_{n,m}.
\]
Since $\Gamma$ is bounded, but not bounded below,   $(f_n)_{n\in\ZZ}$ is a separated Bessel sequence that is not a Riesz  sequence. 
\end{Exam}

However, we will see in the next section that for some special classes of  sequences it is indeed true that the Bessel property together with separability imply the Riesz property. These will be sequences of normalized reproducing kernels in certain reproducing kernel Hilbert spaces.

\section{Reproducing kernel Hilbert spaces}
Let $\HH$ be a Hilbert space of functions on some set $\Omega$ such that evaluation at each point of $\Omega$ is a non-zero continuous functional on $\HH$. By Riesz theorem, for each point $\lambda\in\Omega$, there exists $k_\lambda\in\HH$ such that 
\[
f(\lambda)=\langle f,k_\lambda\rangle,\qquad f\in\HH.
\]
The function $k_\lambda$ is called the reproducing kernel of $\HH$. We denote also by $\widehat k_\lambda$ the normalized reproducing kernel, that is $\widehat k_\lambda=k_\lambda/\|k_\lambda\|$. 

In their recent paper~\cite{LP}, Lata and Paulsen have investigated the Feichtinger Conjecture for sequences of normalized reproducing kernels. Although this appears at first sight as a restrictive condition, it is proved in~\cite{LP} that the truth of (FC) for a special class of normalized reproducing kernels (namely, the so-called \emph{de Branges spaces}) would in fact imply its general validity. Note that the Feichtinger Conjecture for certain de Branges spaces has been proved by Baranov and Dyakonov in~\cite{BD}.

Let us specialize to the case of normalized reproducing kernels our main objects of study. 
A sequence $\Lambda=(\lambda_n)_{n\geq 1}$ of $\Omega$ is called an \emph{interpolating sequence for $\HH$} if  the sequence $(\widehat k_{\lambda_n})_{n\geq 1}$ is a Riesz  sequence; it is \emph{separated} if the corresponding sequence of normalized reproducing kernels $(\widehat k_{\lambda_n})_{n\geq 1}$ is separated in the sense of~\eqref{eq:separated}.  As for the Bessel sequence condition, this is related to embedding properties of the space $\HH$. Namely, one says that a measure $\mu$ on $\Omega$ is a Carleson measure for $\HH$ if $\HH\subset L^2(\mu)$. Then one can see that  $(\widehat k_{\lambda_n})_{n\geq 1}$ is a Bessel sequence if and only if the measure $\mu_{\Lambda}=\sum_{n\geq 1}\|k_{\lambda_n}\|^{-2}\delta_{\lambda_n}$ is a Carleson measure for $\HH$ (where $\delta_\lambda$ is the Dirac measure at point $\lambda$).

Suppose we renorm $\HH$ with an equivalent norm, obtaining thus a  space $\HH'$  with reproducing kernel $k'_\lambda$. The above properties for a sequence $\Lambda=(\lambda_n)_{n\geq 1}$ are invariant with respect to this renormalization; indeed, if we denote by $A:\HH\to\HH'$ the formal identity $Af=f$, then
\[
 f(\lambda)=\langle A f, k'_\lambda\rangle_{\HH'} =\langle  f,A^* k'_\lambda\rangle_\HH,
\]
whence it follows that $k_\lambda=A^* k'_\lambda$. Since $A^*$ is invertible, the assertion is a consequence of Lemma~\ref{le:action of invertible}.

A basic example of reproducing kernel space is the Hardy space $H^2$ on the unit disc $\DD$; in this case we have, for $\lambda\in\DD$, $k_\lambda(z)=(1-\overline{\lambda}z)^{-1}$. It has been noticed by Nikolski~\cite{talk} that any
normalized sequence of reproducing kernels of $H^2$ satisfies the Feichtinger Conjecture. This is proved by noting that the Carleson-Shapiro-Shields Theorem implies that if a sequence $\Lambda$ is separated and $\mu_\Lambda$ is a Carleson measure, then $\Lambda$ is interpolating. 
In the abstract framework of Section~2, this means that in $H^2$ any separated Bessel sequence of normalized reproducing kernels is a Riesz sequence.  Theorem~\ref{Thm:main} implies then the validity of (FC) in this case. It turns out that there is a much larger class where this argument is relevant; but we have to introduce first some new terminology.

A reproducing kernel space $\HH$ will be called a \emph{complete Nevanlinna--Pick} space if:

a) $k_\lambda(z)\not=0$ for all $\lambda, z\in\Omega$.

b) There exists $\omega_0\in\Omega$ such that the function
\[
 1-\frac{k_{\omega_0}(z)k_{\lambda}(\omega_0)}{k_{\omega_0}(\omega_0)k_{\lambda}(z)}
\]
is positive definite. 

This class shares many  properties of the classical Hardy space; in particular, its name comes from the fact that an interpolation result similar to the classical Nevanlinnna--Pick theorem is valid.
It contains several spaces of interest; let us only mention the Drury--Arveson space in several variables or the Dirichlet space. For a complete discussion concerning complete Nevanlinna--Pick spaces  we refer to \cite{Agler-McCarthy}. It was conjectured therein (see also Seip \cite{seip}) that in a complete Nevanlinna--Pick space any separated Bessel sequence of normalized reproducing kernels is a Riesz sequence; or, equivalently:

\vskip 0.3cm
\noindent
{\bf Conjecture 3. Agler--McCarthy--Seip Conjecture (ACSC)}. Suppose that $\HH$ is a complete Nevanlinna--Pick space on $\Omega$. If $\Lambda=(\lambda_n)_{n\geq 1}$ is separated and the measure $\mu_\Lambda$ is a Carleson measure for $\HH$, then $\Lambda$ is an interpolating sequence for $\HH$.

\vskip 0.3 cm

According to Theorem~\ref{Thm:main}, we see that (ACSC) implies (FC) for normalized reproducing kernels in  complete Nevanlinna--Pick spaces. As noted above, (ACSC) is valid for the classical Hardy space $H^2$. Although its truth  has not yet been established in full generality, we will discuss in the next section some cases in which it is known to be valid.

\section{Examples}

A general class of complete Nevanlinna--Pick spaces for which (ACSC) is true has been found by Boe~\cite{Boe}. Consider the following condition for a reproducing kernel space $\HH$:

\medskip
\noindent(*) For any sequence $(\lambda_n)_{n\geq 1}$ on $\DD$, the boundedness on $\ell^2$ of the Gram matrix $(\langle \widehat k_{\lambda_n},\widehat k_{\lambda_m}\rangle)_{n,m}$ implies the boundedness  of the matrix $(|\langle \widehat k_{\lambda_n},\widehat k_{\lambda_m}\rangle|)_{n,m}$.

\medskip
In~\cite{Boe} it is proved that (ACSC) is true in complete Nevanlinna--Pick spaces that satisfy (*). 
An even more general condition that implies the truth of (ACSC) is given in~\cite[page 34]{seip}.
 As a consequence, the Feichtinger Conjecture for normalized reproducing kernels is also valid in such spaces.

As an application, consider, for $\alpha\in [0,1]$, the Dirichlet space $\mathcal D_\alpha$, defined as the class of analytic functions $f$ on $\DD$ such that 
\[
\|f\|_{\mathcal D_\alpha}^2=|f(0)|^2+\int_{\DD}|f'(z)|^2(1-|z|^2)^{1-\alpha}dA(z)<+\infty.
\]
For $\alpha=0$, we recover the Hardy space $H^2$ and for $\alpha=1$, we obtain the classical Dirichlet space. After a renormalization with an equivalent norm, these can be shown to be complete Nevanlinna--Pick spaces (see \cite{Agler-McCarthy, seip}). It has been noticed by Boe that, for $0<\alpha\le 1$, they also satisfy condition (*). Although the kernel for the Hardy space does not satisfy (*), we have noticed above that normalized reproducing kernels therein satisfy (FC). We may then
formulate the following corollary.

\begin{Cor}
Let $0\leq \alpha\leq 1$ and let $\widehat k_\lambda^{\alpha} $ be the normalized reproducing kernel of $\mathcal D_\alpha$ at point $\lambda$. Then, for any sequence $(\lambda_n)_{n\geq 1}$ on $\DD$,  the sequence $(\widehat k_{\lambda_n}^\alpha)_{n\geq 1}$ satisfies the Feichtinger Conjecture.
\end{Cor}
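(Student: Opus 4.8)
The plan is to assemble the general principle already highlighted in the text—that (ACSC) together with Theorem~\ref{Thm:main} yields (FC) for normalized reproducing kernels—with the \emph{known} validity of (ACSC) in each of the spaces $\mathcal D_\alpha$. Before invoking (ACSC), I would dispose of the renormalization issue. With respect to the norm $\|\cdot\|_{\mathcal D_\alpha}$ the space need not be a complete Nevanlinna--Pick space, but after passing to an equivalent norm it is (for $0<\alpha\le 1$), while for $\alpha=0$ it is the Hardy space $H^2$. As observed earlier, the Bessel, Riesz, and separation properties of a sequence of normalized reproducing kernels are invariant under such a renormalization, since the formal identity $A\colon\HH\to\HH'$ is invertible, satisfies $k_\lambda=A^* k'_\lambda$, and one applies Lemma~\ref{le:action of invertible}. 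Hence I may carry out the argument in the renormalized space and transport the conclusion back.

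Next I would record the validity of the implication \emph{separated $+$ Bessel $\Rightarrow$ Riesz} for normalized reproducing kernels in each case. For $0<\alpha\le 1$, the renormalized $\mathcal D_\alpha$ is a complete Nevanlinna--Pick space satisfying condition (*), so Boe's theorem gives (ACSC): every separated Bessel sequence of normalized reproducing kernels is a Riesz sequence. For $\alpha=0$ we are in $H^2$, where (ACSC) holds by the Carleson--Shapiro--Shields theorem, as already noted. Thus for every $0\le\alpha\le 1$ a separated Bessel sequence of normalized reproducing kernels is automatically a Riesz sequence.

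Finally I would run the partition argument. Let $(\lambda_n)_{n\ge 1}$ be a sequence on $\DD$ for which $(\widehat k_{\lambda_n}^\alpha)_{n\ge 1}$ is a Bessel sequence; being normalized reproducing kernels, its members are unit vectors, so Theorem~\ref{Thm:main} partitions it into finitely many separated Bessel sequences. Each such piece is again a separated Bessel sequence of normalized reproducing kernels (indexed by a subset of $\Lambda$), hence by the previous step a Riesz sequence. Therefore $(\widehat k_{\lambda_n}^\alpha)_{n\ge 1}$ splits into finitely many Riesz sequences, which is exactly (FC$'$) for this sequence. The only substantive inputs are the external results furnishing (ACSC)—Boe's theorem for $0<\alpha\le 1$ and Carleson--Shapiro--Shields for $\alpha=0$—so I do not anticipate a genuine obstacle beyond correctly marshalling these together with the renormalization invariance; the corollary is then a direct application of the machinery of the preceding sections.
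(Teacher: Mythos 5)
Your proposal is correct and follows essentially the same route as the paper: renormalization invariance via Lemma~\ref{le:action of invertible}, Boe's theorem giving (ACSC) for the renormalized $\mathcal D_\alpha$ with $0<\alpha\le 1$ via condition (*), the Carleson--Shapiro--Shields theorem handling $\alpha=0$ where (*) fails, and Theorem~\ref{Thm:main} supplying the partition into separated Bessel (hence Riesz) sequences. The paper leaves this assembly implicit in the surrounding discussion rather than writing a formal proof, but your argument is exactly that assembly, carefully spelled out.
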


One should note that for the case of the Dirichlet space $\mathcal D_1$, (ASCS) had already been proved in~\cite{MS}.

Another class of spaces for which (ACSC) is true is given by certain local Dirichlet spaces $\mathcal D(\mu)$.
Let $\mu$ be a positive finite Borel measure on $\DD$ and let $\mathcal D(\mu)$ be the space of analytic functions $f$ on $\DD$ such that 
\[
\|f\|^2_{\mathcal D(\mu)}:=|f(0)|^2+\int_\DD |f'(z)|^2 P_\mu(z) dA(z)<+\infty,
\]
where $P_\mu$ is the harmonic extension of $\mu$ to $\DD$, i.e.,
\[
P_\mu(z) = \int_\TT \frac{1-|z|^2}{|\zeta-z|^2}\, d\mu(\zeta), \hspace{1cm} z\in \DD.
\]
In particular, if $\mu$ is taken to be the normalized Lebesgue measure on $\TT$, then  $P_\mu(z)=1$, $z\in\DD$, and
therefore $\mathcal D(\mu)$ is the classical Dirichlet space. 

It has been shown by Shimorin~\cite{shimorin} that, again after a renormalization with an equivalent norm,
the spaces $\mathcal D(\mu)$ are complete Nevanlinna--Pick spaces. Further, in the particular case of measures $\mu$ which are  finite sums of point masses, 
G.~Chac\`on proved in \cite{GR2} that $\mathcal D(\mu)$ satisfies the conjecture (ACSC). Therefore we obtain the following result.

\begin{Cor}
Let $\mu$ be a finite sum of point masses and denote by $\widehat k_\lambda^{\mu}$ the normalized reproducing kernel of $\mathcal D(\mu)$ at point $\lambda$. Then, for any sequence $(\lambda_n)_{n\geq 1}$ on $\DD$,  the sequence $(\widehat k_{\lambda_n}^\mu)_{n\geq 1}$ satisfies the Feichtinger Conjecture.
\end{Cor}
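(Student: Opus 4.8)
The plan is to reduce the statement to the machinery already in place: Theorem~\ref{Thm:main} combined with the validity of (ACSC) for the spaces $\mathcal D(\mu)$ under consideration. Since the normalized reproducing kernels $\widehat k^\mu_{\lambda_n}$ are unit vectors, asserting that they satisfy the Feichtinger Conjecture means, in the spirit of (FC$'$), that whenever $(\widehat k^\mu_{\lambda_n})_{n\geq 1}$ is a Bessel sequence it can be partitioned into finitely many Riesz sequences; if the sequence fails to be Bessel there is nothing to prove.

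First I would invoke Theorem~\ref{Thm:main}: a Bessel sequence of unit vectors in any Hilbert space splits into finitely many separated Bessel sequences. Applying this to $(\widehat k^\mu_{\lambda_n})_{n\geq 1}$ yields a partition of the index set $\NN$ into finitely many subsets, each indexing a subsequence of $\Lambda=(\lambda_n)_{n\geq 1}$ whose normalized kernels form a separated Bessel sequence. In the terminology of Section~3, each such subsequence is separated and has associated Carleson measure $\mu_\Lambda$, which is precisely the hypothesis of (ACSC).

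The second step is to apply (ACSC) to each piece. Here I would use that, by Shimorin~\cite{shimorin}, $\mathcal D(\mu)$ becomes a complete Nevanlinna--Pick space after passing to an equivalent norm, and that by Chac\`on~\cite{GR2} this space satisfies (ACSC) when $\mu$ is a finite sum of point masses. The one point requiring attention is that (ACSC) is stated for the renormed space, whereas the kernels live a priori in $\mathcal D(\mu)$; this is settled by the renormalization invariance recorded after Lemma~\ref{le:action of invertible}, which guarantees that the Bessel, separation, and interpolation (Riesz) properties of a sequence are unchanged under an equivalent renorming. Consequently each separated Bessel subsequence is, by (ACSC), an interpolating sequence, i.e.\ its normalized kernels form a Riesz sequence. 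Reassembling the finitely many pieces delivers the desired partition of $(\widehat k^\mu_{\lambda_n})_{n\geq 1}$ into finitely many Riesz sequences.

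I do not expect a genuine obstacle internal to this note. The deep content—establishing the complete Nevanlinna--Pick property and verifying (ACSC) for $\mathcal D(\mu)$—is supplied by the cited work of Shimorin and Chac\`on, which we are entitled to assume. The only real care needed is bookkeeping: tracking the renormalization so that (ACSC) may legitimately be invoked on the sequences at hand, and translating between the ``interpolating sequence'' formulation of (ACSC) and the ``Riesz sequence / finite partition'' formulation of (FC).
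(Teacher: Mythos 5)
Your proposal is correct and follows essentially the same route as the paper: the corollary there is stated as an immediate consequence of Theorem~\ref{Thm:main} (splitting a Bessel sequence of unit vectors into finitely many separated Bessel sequences), the validity of (ACSC) for $\mathcal D(\mu)$ with $\mu$ a finite sum of point masses (Shimorin's complete Nevanlinna--Pick property plus Chac\`on's theorem), and the renormalization invariance recorded after Lemma~\ref{le:action of invertible}. Your write-up simply makes explicit the bookkeeping that the paper leaves implicit in the word ``Therefore.''
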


Let us note, finally, that all known examples of reproducing kernel spaces in which interpolating sequences $\Lambda$ can be characterized by separation plus the property that $\mu_\Lambda$ is a Carleson measure can be renormed so as to obtain complete Nevanlinna--Pick spaces. It would be interesting to find a different type of reproducing kernel space for which (ASCS) is true.

\vskip 0.5 cm


\begin{thebibliography}{AA}
 
\bibitem{Agler-McCarthy} Agler, J. and Mc Carthy, J. \emph{Pick Interpolation and Hilbert Functions Spaces}, Graduate Studies in Mathematics, Amer. Math. Soc., vol. 44 (2002). 
 
\bibitem{BD} Baranov, A. and Dyakonov, K.: The Feichtinger Conjecture for reproducing kernels in model subspaces. \emph{J. Geom. Anal.} {\bf 21} (2011), no. 2, 276--287. 

\bibitem{Boe} Boe, B.: An interpolation theorem for Hilbert spaces with Nevanlinna-Pick kernel. \emph{Proc. Amer. Math. Soc.} {\bf 133} (2005), no. 7, 2077--2081.

\bibitem{casazza-christensen2003} Casazza, P., Christensen, O., Lindner, A. and Vershynin, R.: Frames and the Feichtinger Conjecture.  \emph{Proc. Amer. Math. Soc.} {\bf 133} (2005), no. 4, 1025--1033.

\bibitem{casazza-tremain} Casazza, P. and Tremain, J.: The Kadison--Singer problem in mathematics and engineering. \emph{Proc. Natl. Acad. Sci. USA} {\bf 103} (2006), no. 7, 2032--2039.

\bibitem{GR2} Chac\'on G. R.: Interpolating sequences in harmonically weighted Dirichlet spaces. \emph{Int. Eq. and Op. Theory} {\bf 69} (2011), 73--85.

\bibitem{LP} Lata, S. and Paulsen, V.I.: The Feichtinger Conjecture and reproducing kernel Hilbert spaces, \emph{Indiana Univ. Math. Journal}, to appear.  

\bibitem{MS} Sundberg, C. and Marshall, D.: Interpolating sequences for the multipliers of the Dirichlet space, preprint, see www.math.washington.edu/$\sim$marshall/preprints/preprints.html.

\bibitem{treatise} Nikolski, N. \emph{Treatise on the Shift Operator}, Springer Verlag, vol. 273 (1986). 

\bibitem{talk} Nikolski, N., Talk at the AIM workshop \emph{The Kadison--Singer problem}, 2006.

\bibitem{seip} Seip, K. \emph{Interpolation and Sampling in Spaces of Analytic Functions}, University Lecture Series, AMS, vol. 33 (2004).

\bibitem{shimorin}
Shimorin, S.:
Complete Nevanlinna-Pick property of Dirichlet-type spaces. 
\emph{J. Funct. Anal.} {\bf 191} (2002), no. 2, 276--296. 

\end{thebibliography}
\end{document}